\theoremstyle{plain}
\newtheorem{theorem}{Theorem}
\newtheorem{definition}[theorem]{Definition}
\newtheorem{claim}[theorem]{Claim}
\newtheorem{lemma}[theorem]{Lemma}
\newtheorem{conjecture}[theorem]{Conjecture}
\theoremstyle{definition}
\DeclareMathOperator*{\F}{\mathcal{F}}
\DeclareMathOperator*{\M}{\mathcal{M}}
\DeclareMathOperator*{\C}{\mathcal{C}}
\author{
Andrey Kupavskii \thanks{\'{E}cole Polytechnique F\'{e}d\'{e}rale de Lausanne, Research partially supported by Swiss National Science Foundation grants no. 200020-162884 and 200021-175977. \emph{e-mail}: \textbf{\{andrei.kupavskii, janos.pach, istvan.tomon\}@epfl.ch}} \thanks{Moscow Institute of Physics and Technology, Research partially supported by the grant N 15-01-03530 of the Russian Foundation for Basic Research.}
\and
J\'{a}nos Pach \footnotemark[1]
\and
Istv\'{a}n Tomon \footnotemark[1]	
}
\title{On the size of $k$-cross-free families}
\begin{document}
\sloppy

\maketitle

\begin{abstract}
	Two subsets $A,B$ of an $n$-element ground set $X$ are said to be \emph{crossing}, if none of the four sets $A\cap B$, $A\setminus B$, $B\setminus A$ and $X\setminus(A\cup B)$ are empty. It was conjectured by Karzanov and Lomonosov forty years ago that if a family $\F$ of subsets of $X$ does not contain $k$ pairwise crossing elements, then $|\F|=O_{k}(n)$. For $k=2$ and $3$, the conjecture is true, but for larger values of $k$ the best known upper bound, due to Lomonosov, is $|\F|=O_{k}(n\log n)$. In this paper, we improve this bound by showing that $|\F|=O_{k}(n\log^{*} n)$ holds, where $\log^{*}$ denotes the iterated logarithm function.
\end{abstract}

\section{Introduction}
 As usual, denote $[n]:=\{1,\ldots,n\}$ and let $2^{[n]}$ be the family of all subsets of $[n]$. Two sets $A,B\in 2^{[n]}$ are said to be \emph{crossing}, if $A\setminus B$, $B\setminus A$, $A\cap B$ and $[n]\setminus(A\cup B)$ are all non-empty.

 We say that a family $\F\subset 2^{[n]}$ is \emph{$k$-cross-free} if it does not contain $k$ pairwise crossing sets. The following conjecture was made by Karzanov and Lomonosov~\cite{KL}, \cite{K} and later by Pevzner~\cite{P}; see also Conjecture 3 in ~\cite{BMP}, Section 9.
 
 \begin{conjecture}\label{mainconj}
 	Let $k\geq 2$ and $n$ be positive integers, and let $\F\subset 2^{[n]}$ be a $k$-cross-free family. Then $|\F|=O_{k}(n)$.
 \end{conjecture}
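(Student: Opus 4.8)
\section*{Proof proposal}

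The plan is to reach the linear bound by first normalizing $\F$ so that crossing becomes the cleaner relation of \emph{proper overlap}, and then controlling the family by charging it to a laminar skeleton. I would begin with the observation that the crossing relation is invariant under replacing either set by its complement: the four regions attached to the pair $(A,B)$ coincide, as an unordered collection, with those attached to $(\,[n]\setminus A,\,B)$ and to $(A,\,[n]\setminus B)$. Hence, after replacing every set containing a fixed element (say $1$) by its complement, we may assume every member of $\F$ avoids $1$, at the cost only of possibly merging some sets, which decreases $|\F|$. For two such sets the region $[n]\setminus(A\cup B)$ automatically contains $1$, so the crossing condition collapses to: $A\cap B$, $A\setminus B$, $B\setminus A$ are all non-empty, i.e.\ $A$ and $B$ properly overlap (they meet but neither contains the other). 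It therefore suffices to bound a family of subsets of $[n]$ with no $k$ pairwise properly overlapping members.

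The second step sets up a laminar skeleton. Recall that a \emph{laminar} family, in which any two sets are nested or disjoint, has at most $2n-1$ members, since its sets form the nodes of a forest on the ground set. I would fix a maximal laminar subfamily $\mathcal{L}\subseteq\F$, together with its forest $T$, and attempt to show that $|\F\setminus\mathcal{L}|=O_k(n)$; combined with $|\mathcal{L}|\le 2n-1$ this yields $|\F|=O_k(n)$. By maximality, every $A\in\F\setminus\mathcal{L}$ properly overlaps some member of $\mathcal{L}$ (otherwise $\mathcal{L}\cup\{A\}$ would remain laminar), and the members of $\mathcal{L}$ crossed by a single $A$ form a structured, localized region of $T$. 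The aim is to assign to each $A$ a bounded \emph{fingerprint} in terms of $T$, namely the places where $A$ enters and leaves the laminar structure, so that the total count is governed by the $O(n)$ edges of $T$.

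The hard part, and the reason the full conjecture remains open while only $O_k(n\log^{*}n)$ is proved, is bounding the multiplicity of this charging. Naively one wants only $O_k(1)$ sets per fingerprint, justified by a Ramsey- or Turán-type argument: if too many sets shared a fingerprint one would extract $k$ pairwise crossing ones. The obstruction is that sets crossing the same region of $T$ may be pairwise \emph{non}-crossing, forming their own nested/disjoint subfamily of size up to $\Theta(n)$, so a single-level charge can fail by a factor of $n$. Overcoming this seems to require a recursive or two-level scheme: within each fingerprint class the secondary crossing structure is again proper-overlap with clique number below $k$, inviting an induction on $k$ in which one peels off the sets responsible for the densest crossings and recurses on the remainder. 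The genuine difficulty is to run this recursion so that the per-level constant factors telescope (as a convergent geometric series) rather than accumulating once per level; the divide-and-conquer arguments underlying the $O_k(n\log n)$ and $O_k(n\log^{*}n)$ bounds lose a factor at each of many levels, and the essential new idea needed for $O_k(n)$ is a potential or amortization that makes the recursion loss-free in $n$.
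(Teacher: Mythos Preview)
The statement you are attempting is not a theorem of the paper but an open conjecture; the paper only proves the weaker bound $|\F|=O_k(n\log^{*}n)$. Your first reduction step is correct and coincides with the paper's Lemma~\ref{weaklycross}: complementing every set containing a fixed element converts the $k$-cross-free condition into the absence of $k$ pairwise properly overlapping sets (what the paper calls \emph{weakly $k$-cross-free}), losing at most a factor of~$2$.

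Beyond that reduction, however, your proposal is not a proof but a program with the decisive step explicitly declared missing. You correctly observe that the laminar-skeleton charging can fail by a factor of~$n$, since arbitrarily many pairwise non-crossing sets may share the same fingerprint in the laminar tree, and you then say that repairing this would require an amortization or potential argument making the recursion ``loss-free in~$n$'', without supplying one. That gap is exactly the obstacle that keeps the conjecture open for $k\ge 4$. Incidentally, the paper's route to $O_k(n\log^{*}n)$ does not use a laminar skeleton at all: it partitions $\F$ into dyadic size blocks, extracts chain collections within each block via Dilworth's theorem, and double-counts certain $k$-tuples of chains across blocks; so even the partial result follows a different line from the one you sketch.
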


Here and in the rest of this paper, $f(n)=O_{k}(n)$ means that $f(n)\leq c_kn$ for a suitable constant $c_k>0$, which may depend on the parameter $k$. 
\smallskip

It was shown by Edmonds and Giles~\cite{EG} that every $2$-cross-free family $\F\subset 2^{[n]}$ has at most $4n-2$ members. Pevzner~\cite{P} proved that every $3$-cross-free family on an $n$-element underlying set has at most $6n$ elements, and Fleiner~\cite{F} established the weaker bound $10n$, using a simpler argument.  For $k>3$, Conjecture~\ref{mainconj} remains open. The best known general upper bound for the size of a $k$-cross-free family is $O_{k}(n\log n)$, which can be obtained by the following elegant argument, due to Lomonosov. 

Let $\F\subset 2^{[n]}$ be a maximal $k$-cross-free family. Notice that for any set $A\in\F$, the complement of $A$ also belongs to $\F$. Thus, the subfamily
$${\F}'=\{ A\in\F : |A|<n/2\}\cup\{A\in\F : |A|=n/2\;{\rm and}\; 1\in A\}$$
contains precisely half of the members of $\F$. For every $s,\, 1\le s\le n/2$, any two $s$-element members of $\F'$ that have a point in common, are crossing. Since $\F'$ has no $k$ pairwise crossing members, every element of $[n]$ is contained in at most $k-1$ members of $\F'$ of size $s$. Thus, the number of $s$-element members is at most $(k-1)n/s$, and 
$$|{\F}'|={|\F|}/{2}\le 1+\sum_{s=1}^{n/2}(k-1)n/s=O_{k}(n\log n).$$

 The main result of the present note represents the first improvement on this 40 years old bound. Let $\log_{(i)} n$ denote the function $\log\ldots\log n$, where the $\log$ is iterated $i$ times, and let $\log^{*} n$ denote the \emph{iterated logarithm of $n$}, that is, the largest positive integer $i$ such that $\log_{(i)}n>1$.

\begin{theorem}\label{mainthm}
	Let $k\geq 2$ and $n$ be positive integers, and let $\F\subset 2^{[n]}$ be a $k$-cross-free family. Then $|\F|=O_{k}(n\log^{*}n).$
\end{theorem}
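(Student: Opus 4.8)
The plan is to prove the bound by induction on $n$, the idea being to replace the ground set $[n]$ by a ground set of only roughly $\log n$ elements at the cost of discarding $O_k(n)$ members of the family, so that the wasteful harmonic summation of Lomonosov's argument is paid only once per level and the recursion bottoms out after $O(\log^{*}n)$ levels.

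I would first reduce, exactly as in Lomonosov's argument, to the ``small side'' subfamily $\F'\subseteq\F$ (so that $|\F|=2|\F'|$ and every $A\in\F'$ has $|A|\le n/2$); the goal is then to bound $|\F'|$. Two structural facts drive the argument. First, for a fixed $x\in[n]$ any two members of $\F'_x:=\{A\in\F':x\in A\}$ are comparable or crossing (their intersection contains $x$, their union misses a point, and incomparability forces both differences to be nonempty), so a pairwise crossing subfamily of $\F'_x$ is an antichain of size at most $k-1$; by Dilworth's theorem $\F'_x$ is a union of at most $k-1$ chains, and in particular the members of any one size $s$ through $x$ are pairwise crossing, which re-derives $a_s:=|\{A\in\F':|A|=s\}|\le(k-1)n/s$. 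Second — the \emph{absorption principle} — if $x$ lies in $k-1$ members of $\F'$ of one common size $t<n/2$, these are pairwise crossing, so every member of $\F'$ of size smaller than $t$ containing $x$ must be contained in one of them (otherwise those $k$ sets are pairwise crossing). Applying absorption along a geometric ladder of sizes shows that, apart from $O_k(n)$ ``light'' members, every set of $\F'$ is forced to sit inside a short tower of ever larger members of $\F'$; I would use these towers to build, from $\F'$, a coarsening of the ground set onto only about $n/\log n$ cells and a subfamily $\F''\subseteq\F'$ with $|\F'\setminus\F''|=O_k(n)$ whose members are essentially unions of cells, so that $\F''$ is (equivalent to) a disjoint union of at most $n/\log n$ many $k$-cross-free families, each on a ground set of size $O(\log n)$. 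This yields an inequality of the shape $f(n)\le c_kn+\tfrac{n}{\log n}f\bigl(O(\log n)\bigr)$ for the extremal function $f$; setting $\hat g(N)=\max_{t\le N}f(t)/t$ this reads $\hat g(n)\le c_k+\hat g\bigl(O(\log n)\bigr)$, and unrolling it $O(\log^{*}n)$ times gives $\hat g(n)=O_k(\log^{*}n)$, i.e.\ $f(n)=O_k(n\log^{*}n)$.

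The main obstacle is precisely the coarsening step: the per-point chain bound and the per-size bound alone only reproduce $O_k(n\log n)$, so one genuinely needs the hypothesis ``no $k$ pairwise crossing sets'' used across many scales at once — via the absorption principle and the tower structure it produces — to argue that all but $O_k(n)$ members can be aligned to a common partition of $[n]$ into $n/\log n$ parts. The delicate part is quantitative: bounding the misaligned (boundary-crossing and genuinely large) exceptional members by $c_kn$ with $c_k$ independent of $n$, and doing so while keeping the complement-doubling factor from degrading the recursion — for instance by observing that members which are large relative to their cell are themselves already only $O_k(n)$ in number, hence may be thrown into the exceptional set, so that the subfamily inside each cell may be taken to be ``small side'' there and the recursion acquires leading coefficient $1$. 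The remaining ingredients — the induction bookkeeping, for which the constant hidden in $O(\log n)$ is irrelevant since it does not affect $\log^{*}$, and the base case $f(N)\le 2^{N}$ for bounded $N$ — are routine.
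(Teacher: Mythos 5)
Your overall architecture---a recursion of depth $\log^{*}n$---has the right shape, and several ingredients are sound: the small-side reduction, the per-point Dilworth decomposition, the bound $a_s\le (k-1)n/s$, and the absorption observation itself are all correct (Dilworth applied to intersecting subfamilies is indeed how the paper extracts structure). But the central coarsening step has a genuine gap, and as described it cannot work. Localizing members into disjoint cells of size $O(\log n)$ can only ever capture members of size $O(\log n)$; by Lomonosov's bound those number only $O_{k}(n\log\log n)$ to begin with. The entire difficulty of the theorem lies in the members of size between $\log n$ and $n/2$, of which the trivial bound permits as many as $\Theta_{k}(n\log n)$. In your scheme every such member must be thrown into the exceptional set $\F'\setminus\F''$, so the assertion $|\F'\setminus\F''|=O_{k}(n)$ already contains essentially the full strength of the theorem, and no argument is offered for it. (If instead you meant ``members are unions of cells'' with recursion on the quotient ground set of size $n/\log n$, the recursion $f(n)\le c_kn+f(n/\log n)$ would prove the full linear conjecture, which is a strong sign that the coarsening cannot be established by these means.)

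There are secondary problems as well. The absorption principle requires $k-1$ members of one \emph{exact} common size $t$ through $x$; a point may lie in many members all of distinct sizes (a chain), in which case absorption never fires, and relaxing to dyadic size classes destroys the argument because $k-1$ same-class sets through $x$ can be nested rather than pairwise crossing. The passage from ``each heavy set sits in a tower'' to ``a single partition of $[n]$ into $n/\log n$ cells aligned with all but $O_{k}(n)$ members'' is also unjustified, since towers reach up to size $n/2$ and interleave across the ground set. Finally, a recursion $f(n)\le c_kn+Cn\,\hat g(C'\log n)$ with leading coefficient $C>1$ unrolls to $C^{\log^{*}n}$ rather than $\log^{*}n$; you flag this, but the proposed fix rests on the same unproved coarsening. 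For contrast, the paper never localizes sets: it covers each dyadic block by chains whose maxima form an antichain and double counts incidences between ground elements and $k$-tuples of chains drawn from $k$ distinct blocks, the key point being that one such $k$-tuple can be ``good'' for at most $k-1$ elements (else a $k\times k$ grid appears whose anti-diagonal is pairwise weakly crossing); this yields a bound on $\bigcup_{a<i\le b}\F_{i}$ valid for \emph{all} size scales simultaneously, which is exactly what a cell decomposition cannot provide.
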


Conjecture~\ref{mainconj} has been proved in the following special case. Let $\F$ be a $k$-cross-free family consisting of contiguous subintervals of the cyclic sequence $1, 2,\ldots, n$. It was shown by Capoyleas and Pach \cite{CaP} that in this case
$$|\F|\leq 4(k-1)n-2\binom{2k-1}{2},$$
provided that $n\geq 2k-1$. This bound cannot be improved.
\smallskip

A {\em geometric graph} $G$ is a graph drawn in the plane so that its vertices are represented by points in general position in the plane and its edges are represented by (possibly crossing) straight-line segments between these points. Two edges of $G$ are said to be {\em crossing} if the segments representing them have a point in common.

\begin{conjecture}\label{geomgraph}
 	Let $k\geq 2$ and $n$ be positive integers, and let $G$ be a geometric graph with $n$ vertices, containing no $k$ pairwise crossing edges. Then the number of edges of $G$ is $O_{k}(n)$.
 \end{conjecture}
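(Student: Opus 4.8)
The plan is to build on the two results already at hand: the sharp linear bound of Capoyleas and Pach for edges in convex position, and, more loosely, the abstract estimate of Theorem~\ref{mainthm}. Since the conjecture is exactly known and tight when the $n$ points lie in convex position---there an edge is a chord, two edges cross precisely when their endpoints interleave cyclically, and the cyclic bound $4(k-1)n-2\binom{2k-1}{2}$ applies---the natural strategy is to reduce the general-position problem to convex-position pieces with only a bounded (in $k$) loss.

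The first, conservative step would be to obtain a weak bound by bucketing edges according to a geometric scale, say by their Euclidean length, into $O(\log n)$ length classes. Within one class all edges have comparable length, so one can overlay a grid whose cells have that common side length; a crossing family confined to a bounded cluster of cells sees only boundedly many ``directions'' and can be modelled as chords of a convex curve, whence the cyclic Capoyleas--Pach bound limits it. Summing over the $O(1)$ relevant cell-clusters per vertex and over the $O(\log n)$ length classes would give $O_k(n)$ edges per class and $O_k(n\log n)$ in total, recovering the current record for geometric graphs.

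To push this down to the conjectured $O_k(n)$, I would try to replace the uniform length-bucketing by an adaptive, iterated decomposition, in direct analogy with the $\log n\to\log^{*}n$ improvement carried out for set families in this paper. The aim is to arrange the scales so that each round of the decomposition shrinks the range of relevant length classes super-exponentially, collapsing the $\log n$ buckets down to $\log^{*}n$; applying Capoyleas--Pach at the base of the recursion and charging the edges created at each level against the structure of the level below would first target $O_k(n\log^{*}n)$, with a final amortization of crossings across levels needed to remove the last factor.

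The hard part, and the reason the conjecture is still open, is precisely the convexity assumption that makes the cyclic bound work: a large family of pairwise crossing straight-line segments in general position need not be in convex position, so ``crossing $=$ cyclic interleaving'' is only an approximation, and every level of the decomposition threatens to lose a constant or logarithmic factor instead of being free. Likewise, the purely abstract route through Theorem~\ref{mainthm}---encoding each edge by the cut it induces on the remaining vertices---fails to control the number of \emph{edges}, since many segments may realize the same cut while all pairwise crossing. I expect that closing the gap requires a genuinely geometric separator or partition theorem for crossing segments, stronger than anything extractable from the $k$-cross-free combinatorics alone.
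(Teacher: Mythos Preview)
The statement you were asked to prove is not a theorem but Conjecture~\ref{geomgraph}, and the paper does not prove it. The paper merely states the conjecture, records that it is known for $k\le 4$ and for points in convex position (via Capoyleas--Pach), and cites Valtr's $O_k(n\log n)$ bound as the best known general result. There is therefore no ``paper's own proof'' to compare your attempt against.

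Your write-up is not a proof either, and you say as much: you outline a bucketing-by-length scheme that you hope recovers $O_k(n\log n)$, then speculate about an adaptive refinement toward $O_k(n\log^{*}n)$ and eventually $O_k(n)$, while explicitly conceding that ``the conjecture is still open'' and that the convex-position reduction is the hard, unresolved step. So what you have submitted is a research plan, not a proof proposal, and the central gap you identify---that pairwise crossing segments in general position do not reduce to chords of a convex polygon without loss---is exactly the obstruction that has kept the problem open. Your remark that encoding edges as cuts and invoking Theorem~\ref{mainthm} fails because distinct crossing edges can induce the same cut is also correct. In short: your assessment of the difficulty is accurate, but nothing here constitutes a proof, and the paper makes no claim to have one.
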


The result of Capoyleas and Pach mentioned above implies that Conjecture~\ref{geomgraph} holds for geometric graphs $G$, where the points representing the vertices of $G$ form the vertex set of a convex $n$-gon. It is also known to be true for $k\le 4$; see \cite{AgAP}, \cite{Ac}, \cite{AcT}. For $k>4$, it was proved by Valtr~\cite{V} that if a geometric graph on $n$ vertices contains no $k$ pairwise crossing edges then it its number of edges is $O_k(n\log n)$ edges. 
\smallskip

A bipartite variant of Conjecture \ref{mainconj} was proved by Suk \cite{S}. He showed that if $\F\subset 2^{[n]}$ does not contain $2k$ sets $A_{1},\ldots,A_{k}$ and $B_{1},\ldots,B_{k}$ such that $A_{i}$ and $B_{j}$ are crossing for all $i,j\in [k]$, then $|\F|\leq (2k-1)^{2}n$.
\smallskip

The notion of $k$-cross-free families was first introduced by Karzanov~\cite{K} in the context of multicommodity flow problems. Let $G=(V,E)$ be a graph, $X\subset V$. A multiflow $f$ is a fractional packing of paths in $G$. We say that $f$ locks a subset $A\subset X$ in $G$ if the total value of all paths between $A$ and $X\setminus A$ is equal to the minimum number of edges separating $A$ from $X\setminus A$ in $G$.  A family $\F$ of subsets of $X$ is called {\em lockable} if for every graph $G$ with the above property there exists a multiflow $f$ that locks every member $A\in\F$. The celebrated locking theorem of Karzanov and Lomonosov~\cite{KL} states that a set family is lockable if and only if it is {\em $3$-cross-free}. This is a useful extension of the Ford-Fulkerson theorem for network flows, and it generalizes some previous results of Cherkasky~\cite{Ch} and Lov\'asz~\cite{L}; see also~\cite{FKS}.  

 \section{The proof of Theorem \ref{mainthm}}
 In this section, we prove our main theorem. Throughout the proof, floors and ceilings are omitted whenever they are not crucial, and $\log$ stands for the base $2$ logarithm. Also, for convenience,
 we shall use the following extended definition of binomial coefficients: if $x$ is a real number and $k$ is a positive integer,

$$\binom{x}{k}=\begin{cases} \frac{x(x-1)\ldots(x-k+1)}{k!} &\mbox{if } x\geq k-1 \\
0 & \mbox{if } x<k-1. \end{cases}$$
Let us remark that the function $f(x)=\binom{x}{k}$ is monotone increasing and convex.

A pair of sets, $A,B\in 2^{[n]},$ are said to be \emph{weakly crossing}, if $A\setminus B$, $B\setminus A$ and $A\cap B$ are all non-empty. Clearly, if $A$ and $B$ are crossing, then $A$ and $B$ are weakly crossing as well. We call a set family $\F\subset 2^{[n]}$ \emph{weakly $k$-cross-free} if it does not contain $k$ pairwise weakly crossing sets.

 As our first step of the proof, we show that if $\F\subset 2^{[n]}$ is a $k$-cross-free family, then we can pass to a weakly $k$-cross-free family $\F'\subset 2^{[n]}$ by losing a factor of at most $2$ in the cardinality.

 \begin{lemma}\label{weaklycross}
 	Let $\F\subset 2^{[n]}$ be a $k$-cross-free family. Then there exists a weakly $k$-cross-free family $\F'\subset 2^{[n]}$ such that $|\F'|\geq |\F|/2$.
 \end{lemma}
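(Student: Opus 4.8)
The plan is to exploit the one piece of structure that distinguishes crossing from weakly crossing: two weakly crossing sets $A,B$ fail to be crossing exactly when $A\cup B=[n]$. So I would fix an arbitrary element, say $1\in[n]$, and split $\F$ according to whether a set contains $1$ or not. Let $\F_0=\{A\in\F:1\notin A\}$ and $\F_1=\{A\in\F:1\in A\}$; one of these has size at least $|\F|/2$, so it suffices to show that each of $\F_0$ and $\F_1$ is already weakly $k$-cross-free (in which case we simply take $\F'$ to be the larger one).

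For $\F_0$ this is immediate: if $A_1,\dots,A_k\in\F_0$ were pairwise weakly crossing, then none of them contains $1$, so $[n]\setminus(A_i\cup A_j)\ni 1$ is non-empty for every pair, hence the $A_i$ are pairwise crossing, contradicting that $\F$ is $k$-cross-free. For $\F_1$ I would pass to complements: the map $A\mapsto[n]\setminus A$ sends $\F_1$ bijectively onto a family $\overline{\F_1}=\{[n]\setminus A:A\in\F_1\}$, every member of which avoids $1$; moreover this map preserves both the crossing and the weakly crossing relations, since $(A\setminus B),(B\setminus A),(A\cap B),([n]\setminus(A\cup B))$ are just a permutation of the analogous four sets for the complements $\bar A,\bar B$. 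In particular $\overline{\F_1}$ is still $k$-cross-free, and by the argument for $\F_0$ it is weakly $k$-cross-free; pulling back along the complement map, $\F_1$ itself is weakly $k$-cross-free. (One should note that $\overline{\F_1}$ need not be a subfamily of $\F$, but that does not matter: weak $k$-cross-freeness of $\F_1$ follows directly from that of its image under a relation-preserving bijection, and $\F_1\subset\F\subset 2^{[n]}$ anyway.)

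There is essentially no obstacle here — the only thing to be a little careful about is the bookkeeping of the four "quadrant" sets under complementation, and the observation that the failure of crossing for a weakly crossing pair is governed precisely by the single condition $A\cup B=[n]$, equivalently by the pair having no common point in the complement. Since a fixed element $1$ lies either outside all sets in $\F_0$ or (after complementing) outside all sets in $\overline{\F_1}$, no witnessing $k$-tuple of pairwise weakly crossing sets can live entirely in one of the two parts, and taking $\F'$ to be whichever of $\F_0,\F_1$ has at least half the elements of $\F$ completes the proof.
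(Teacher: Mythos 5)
There is a genuine gap in the second half of your argument: complementation does \emph{not} preserve the weakly crossing relation, and as a consequence $\F_1$ need not be weakly $k$-cross-free. It is true that the four quadrants of $(\bar A,\bar B)$ are a permutation of the four quadrants of $(A,B)$ --- which is why the \emph{crossing} relation, requiring all four to be non-empty, is preserved --- but that permutation swaps $A\cap B$ with $[n]\setminus(A\cup B)$. Hence $\bar A$ and $\bar B$ are weakly crossing if and only if $A\setminus B$, $B\setminus A$ and $[n]\setminus(A\cup B)$ are all non-empty, which is a different condition from $A$ and $B$ being weakly crossing. Your ``pulling back'' step therefore fails. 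A concrete counterexample: for $n\geq k+1$ take $A_i=[n]\setminus\{i\}$ for $i=2,\ldots,k+1$. Every $A_i$ contains $1$, any two satisfy $A_i\cup A_j=[n]$, so no two are crossing and the family is even $2$-cross-free; yet the $A_i$ are pairwise incomparable and pairwise intersecting, hence pairwise weakly crossing. With $\F$ equal to this family you get $\F_0=\emptyset$ and $\F'=\F_1=\F$, which contains $k$ pairwise weakly crossing sets.

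The repair is already sitting inside your own proof. You correctly establish that $\overline{\F_1}$ is $k$-cross-free and that all of its members avoid $1$, so by your (correct) argument for $\F_0$ the family $\overline{\F_1}$ is weakly $k$-cross-free, and $|\overline{\F_1}|=|\F_1|$. Since the lemma only requires $\F'\subset 2^{[n]}$ and not $\F'\subset\F$, you should simply take $\F'$ to be $\F_0$ or $\overline{\F_1}$, whichever is larger, instead of insisting that $\F_1$ itself works. This is exactly what the paper does in one step by setting $\F'=\{A\in\F:1\notin A\}\cup\{[n]\setminus A:A\in\F,\ 1\in A\}$ (taking the union costs nothing beyond noting that a collision occurs only when both $A$ and $[n]\setminus A$ lie in $\F$, so $|\F'|\geq|\F|/2$ still holds). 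With that one-line change your argument becomes a correct proof along the same lines as the paper's.
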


 \begin{proof}
 	Let $$\mathcal{F}'=\{A\in \mathcal{F}':1\not\in A\}\cup \{[n]\setminus A:A\in \mathcal{F}',1\in A\}.$$
 	Clearly, we have $|\F'|\geq |\F|/2$.
 	
 	Note that two sets $A,B\in [n]$ are crossing if and only if $A$ and $[n]\setminus B$ are crossing. Hence, $\mathcal{F}'$ does not contain $k$ pairwise crossing sets. But no set in $\mathcal{F}'$ contains $1$, so we cannot have $A\cup B=[n]$ for any $A,B\in \F'$. Thus, $A,B\in \F'$ are crossing if and only if $A$ and $B$ are weakly crossing. Hence, $\F'$ satisfies the conditions of the lemma.
 \end{proof}

 Now Theorem \ref{mainthm} follows trivially from the combination of Lemma \ref{weaklycross} and the following theorem.

 \begin{theorem}\label{mainthm2}
 Let $k\geq 2$ and $n$ be positive integers and let $\F\subset 2^{[n]}$ be a weakly $k$-cross-free family. Then $|\F|=O_{k}(n\log^{*}n).$	
 \end{theorem}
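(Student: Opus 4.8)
The plan is to bound a weakly $k$-cross-free family by a recursive/amortized argument that groups sets by size into dyadic scales and controls the interaction between different scales. Write $\F$ for the family and assume it is maximal. As in Lomonosov's argument, for a fixed size $s$ any two $s$-sets sharing a point are weakly crossing, so there are at most $(k-1)n/s$ sets of size exactly $s$, which sums to $O_k(n\log n)$ over all $s$; the whole point is to save a $\log n / \log^* n$ factor. Partition the sizes into $\Theta(\log n)$ dyadic buckets $B_j = \{A \in \F : 2^j \le |A| < 2^{j+1}\}$. The key structural observation I would try to exploit is that if $A \in B_j$ and $B \in B_{j'}$ with $j \ll j'$ (say $|A|$ much smaller than $|B|$), then the "small" set $A$ behaves almost like a point relative to $B$: either $A \subseteq B$, or $A \cap B = \emptyset$, or $A$ and $B$ are weakly crossing. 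So across a pair of well-separated scales, a weakly-crossing-free condition forces a near-laminar (tree-like) containment structure, and laminar families on an $n$-element ground set have linear size.

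Concretely, I would fix a rapidly growing sequence of thresholds $n_0 > n_1 > \cdots$ (something like $n_{i+1} \approx \log n_i$, so that there are only $O(\log^* n)$ of them) and split $\F = \bigcup_i \F_i$, where $\F_i$ consists of the sets whose size lies in $[n_{i+1}, n_i)$. Within a single band $\F_i$, the ratio of largest to smallest size is huge, so I can run a sub-argument: pick the sets in $\F_i$ of size roughly $n_{i+1}$ (the "small" ones in this band) and note that relative to a "large" set in the band they are essentially atoms. This should let me charge the large sets in $\F_i$ against a laminar-type structure built on the small sets of $\F_i$, bounding $|\{\text{large sets in }\F_i\}| = O_k(|\{\text{small sets in }\F_i\}|) = O_k(n / n_{i+1} \cdot \text{something})$ — and here the convexity of $\binom{x}{k}$ is presumably used to turn the local "$\le k-1$ sets per point" bounds into a global count via a double-counting / Kruskal–Katona-flavored inequality. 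Summing the $O(\log^* n)$ bands each contributing $O_k(n)$ gives $O_k(n \log^* n)$.

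I would set up the core estimate as follows: for each scale consider the bipartite "shadow" incidences between sets and the ground set (or between sets of one scale and sets of the next smaller scale), write down that each ground element is covered $\le k-1$ times at each fixed size, and then use convexity of $x \mapsto \binom{x}{k}$ to bound the number of weakly-crossing-free "chains" or containment relations; the telescoping across scales is what collapses the $\log n$ to $\log^* n$. The honest bookkeeping is: define $a_i = |\F_i|$, prove a recursive inequality of the shape $a_i \le O_k(n) + (\text{contribution that gets absorbed into the next band})$, and sum.

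The main obstacle I anticipate is that the "small set acts like a point" heuristic is not literally true: a small set $A \subset B$ is fine, but two small sets inside the same large set $B$ can still weakly cross each other, so the laminarity only holds *across* scales, not within, and one must carefully avoid double-counting a set that plays the "large" role at one scale and the "small" role at another. Handling the boundary sizes (sets whose size sits near a threshold $n_{i+1}$), and making sure the recursion's error terms genuinely telescope rather than accumulate a second $\log$, is where the real work lies. I expect the convexity/Kruskal–Katona input to be exactly the tool that makes the per-scale loss a true $O_k(n)$ rather than $O_k(n \log n)$, so getting that inequality in the right (tight) form is the crux.
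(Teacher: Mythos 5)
Your high-level skeleton matches the paper's: dyadic size-blocks, a tower-type sequence of thresholds producing $O_k(\log^* n)$ bands, and a double-counting argument using convexity of $x\mapsto\binom{x}{k}$ to beat the trivial per-size bound. But the central step --- why a single band contributes only $O_k(n)$ --- is missing, and the heuristic you offer in its place does not supply it. Your ``small set acts like a point'' trichotomy (either $A\subseteq B$, or $A\cap B=\emptyset$, or $A$ and $B$ weakly cross) is in fact true for \emph{every} pair with $|A|<|B|$, with no scale separation needed; it is just the definition of weakly crossing. More importantly, it does not yield laminarity: the hypothesis only forbids $k$ \emph{pairwise} weakly crossing sets, so up to $k-1$ mutual crossings are allowed everywhere, and ``near-laminar'' families with that much slack are exactly the object whose size is in question. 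Note also that the naive count inside one band $[n_{i+1},n_i)$ with $n_{i+1}\approx\log n_i$ is $\sum_s (k-1)n/s\approx (k-1)n\log n_i$, not $O_k(n)$, so the band decomposition alone buys nothing; some genuinely cross-scale forbidden configuration must be identified and exploited.

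The paper's mechanism, which your proposal lacks, is the following. Within each dyadic block, any two sets sharing a point are weakly crossing once incomparable, so Dilworth's theorem covers a $1/(k-1)$ fraction of the block by chains whose maxima form an antichain. Each chain $\C$ is assigned a transversal $Y(\C)$ (one point per successive difference), and one counts pairs $(y,(\C_1,\ldots,\C_k))$ where the $\C_i$ come from $k$ distinct blocks and the minimal members containing $y$ are nested. The crux is that such a $k$-tuple of chains can be ``good'' for at most $k-1$ points: if it were good for $k$ points $y_1,\ldots,y_k$, the $k\times k$ array of minimal sets $C_{i,j}$ would contain the anti-diagonal $C_{1,k},C_{2,k-1},\ldots,C_{k,1}$, which is an intersecting antichain, i.e.\ $k$ pairwise weakly crossing sets. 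Comparing the resulting upper bound on incidences with the convexity lower bound gives $|\F_{a,b}|\lesssim_k n\, b^{(k-1)/k}/2^{a/k}$, and it is this two-endpoint inequality (not a telescoping recursion of the form you describe) that makes each band with $b\approx 2^{a/(k-1)}$ contribute $O_k(n)$. Without an analogue of this forbidden configuration, your recursive inequality has no proof route.
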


The rest of this section is devoted to the proof of this theorem. Let us briefly sketch the idea of the proof while introducing some of the main notation.

Let $\F$ be a weakly $k$-cross-free family. First, we shall divide the elements of $\F$ into $\log n$ parts according to their sizes: for $i=0,\ldots,\log n$, let $\mathcal{F}_{i}:=\{X\in \mathcal{F}: 2^{i}< |X|\leq 2^{i+1}\}$. We might refer to the families $\F_{i}$ as \emph{blocks}. Next, we show that, as the block $\F_{i}$ is weakly $k$-cross-free, it must have the following property: a positive proportion of $\F_{i}$ can be covered by a collection of chains $\Gamma_{i}$ with the maximal elements of these chains forming an antichain. These chains are going to be the objects of main interest in our proof.

We show that if $\F$ has too many elements, then we can find $k$ chains $\C_{1}\subset \Gamma_{i_{1}},\ldots,\C_{k}\subset \Gamma_{i_{k}}$ for some $i_{1}<\ldots<i_{k}$, and $k$ elements $C_{j,1}\subset\ldots\subset C_{j,k}$ in each chain $\C_{j}$ such that $C_{j,l}\subset C_{j',l'}$ if $j\leq j'$ and $l\leq l'$, and $C_{j,l}$ and $C_{j',l'}$ are weakly crossing otherwise. But then we arrive to a contradiction since the $k$ sets $C_{1,k},C_{2,k-1},\ldots,C_{k,1}$ are pairwise weakly crossing.

 Now let us show how to execute this argument precisely.

\begin{proof}[Proof of Theorem \ref{mainthm2}.]  Without loss of generality, we can assume that $\F$ does not contain the empty set and $1$-element sets, since by deleting them we decrease the size of $\F$ by at most $n+1$.
	
	Let us remind the reader of the  definition of blocks: for $i=0,1,\ldots,\log n$, we have $$\mathcal{F}_{i}:=\{X\in \mathcal{F}: 2^{i}< |X|\leq 2^{i+1}\}.$$
	The next claim gives an upper bound on the size of an antichain in $\F_{i}$.
	
	\begin{claim}\label{claim1}
		 If $\mathcal{A}\subset \mathcal{F}_{i}$ is an antichain, then $$|\mathcal{A}|\leq \frac{(k-1)n}{2^{i}}.$$
    \end{claim}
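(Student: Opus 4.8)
\textbf{Proof proposal for Claim~\ref{claim1}.} The plan is a double-counting argument essentially identical in spirit to Lomonosov's bound quoted in the introduction. The key observation is that within an antichain, having a common element already forces two sets to be weakly crossing.

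First I would argue that every point $x\in[n]$ lies in at most $k-1$ members of $\mathcal{A}$. Suppose to the contrary that $A_1,\ldots,A_k\in\mathcal{A}$ all contain some point $x$. Fix $i\neq j$. Since $\mathcal{A}$ is an antichain, neither $A_i\subseteq A_j$ nor $A_j\subseteq A_i$ holds, so both $A_i\setminus A_j$ and $A_j\setminus A_i$ are nonempty; moreover $x\in A_i\cap A_j$, so $A_i\cap A_j$ is nonempty as well. Hence $A_i$ and $A_j$ are weakly crossing. As this holds for every pair, $A_1,\ldots,A_k$ are $k$ pairwise weakly crossing members of $\mathcal{F}$, contradicting the assumption that $\mathcal{F}$ is weakly $k$-cross-free.

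Next I would count incidences between points and sets in two ways:
$$\sum_{A\in\mathcal{A}}|A|=\sum_{x\in[n]}\bigl|\{A\in\mathcal{A}:x\in A\}\bigr|\leq (k-1)n,$$
using the previous paragraph for the inequality. On the other hand, every $A\in\mathcal{A}$ satisfies $|A|>2^{i}$ by the definition of $\mathcal{F}_{i}$, so $\sum_{A\in\mathcal{A}}|A|>2^{i}|\mathcal{A}|$. Combining the two estimates gives $2^{i}|\mathcal{A}|<(k-1)n$, that is, $|\mathcal{A}|\leq (k-1)n/2^{i}$, as claimed.

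There is no real obstacle here: the only point requiring care is the elementary implication "common element plus antichain $\Rightarrow$ weakly crossing," which is exactly why passing from $k$-cross-free to weakly $k$-cross-free (Lemma~\ref{weaklycross}) is useful — it lets us control antichains block by block via this clean counting step, which will feed into the chain-covering argument sketched above.
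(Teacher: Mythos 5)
Your proposal is correct and follows exactly the paper's argument: common element plus antichain forces pairwise weak crossings, so each point lies in at most $k-1$ members of $\mathcal{A}$, and the incidence count $\sum_{A\in\mathcal{A}}|A|\geq 2^{i}|\mathcal{A}|$ finishes the bound. You merely spell out the weak-crossing implication that the paper states in one line.
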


    \begin{proof}
    	Suppose that there exists $x\in [n]$ such that $x$ is contained in $k$ sets from $\mathcal{A}$. Then these $k$ sets are pairwise weakly crossing. Hence, every element of $[n]$ is contained in at most $k-1$ of the sets in $\mathcal{A}$, which implies that
    	$$(k-1)n\geq \sum_{A\in \mathcal{A}} |A|\geq |\mathcal{A}|2^{i}.$$
    \end{proof}

    In the next claim, we show that a positive proportion of $\F_{i}$ can be covered by chains whose maximal elements form an antichain. We shall use the following notation concerning chains. If $\C$ is a chain of size $l$, denote its elements by $\C(1)\subset\ldots\subset \C(l)$. Accordingly, let $\min \C=\C(1)$ and $\max \C=\C(l)$.
	
	\begin{claim}\label{claim2}
		For every $i\geq 0$, there exists a collection $\Gamma_{i}$ of chains in $\F_{i}$ such that $\{\max \C:\C\in \Gamma_{i}\}$ is an antichain and
		$$\sum_{\C\in \Gamma_{i}}|\C|\geq \frac{|\mathcal{F}_{i}|}{k-1}.$$
	\end{claim}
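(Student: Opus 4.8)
The plan is the following. Let $\mathcal{A}$ denote the set of inclusion-maximal elements of $\F_i$; it is automatically an antichain, and it will turn out to be exactly the set of maxima of the chains in $\Gamma_i$. For each $B\in\F_i$ fix a maximal element $\sigma(B)\in\mathcal{A}$ with $B\subseteq\sigma(B)$, choosing $\sigma(A)=A$ when $A\in\mathcal{A}$, and for $A\in\mathcal{A}$ let $\F_i[A]:=\{B\in\F_i:\sigma(B)=A\}$. The families $\F_i[A]$, $A\in\mathcal{A}$, partition $\F_i$; moreover $A\in\F_i[A]$, and $A$ is the largest element of the poset $\F_i[A]$, because $B\subseteq\sigma(B)=A$ for every $B\in\F_i[A]$. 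If, for every $A\in\mathcal{A}$, we can find a chain $\C_A\subseteq\F_i[A]$ with $|\C_A|\ge|\F_i[A]|/(k-1)$, then (as explained below) $\max\C_A=A$, so $\Gamma_i:=\{\C_A:A\in\mathcal{A}\}$ is a collection of pairwise disjoint chains whose set of maxima is the antichain $\mathcal{A}$, and $\sum_{\C\in\Gamma_i}|\C|=\sum_{A\in\mathcal{A}}|\C_A|\ge\frac{1}{k-1}\sum_{A\in\mathcal{A}}|\F_i[A]|=\frac{|\F_i|}{k-1}$, which is what we want.

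The heart of the argument, and the only place where weak $k$-cross-freeness and the defining inequality $2^i<|X|\le2^{i+1}$ of the block $\F_i$ are both used, is the observation that the poset $\F_i[A]$ has width at most $k-1$. Indeed, let $B,B'$ be two members of $\F_i[A]$. Since $B,B'\subseteq A$, we have $|B\cup B'|\le|A|\le2^{i+1}$, whereas $|B|+|B'|>2\cdot2^i=2^{i+1}$; hence $|B\cap B'|=|B|+|B'|-|B\cup B'|>0$, that is, $B\cap B'\ne\emptyset$. Therefore any antichain in $\F_i[A]$ consists of pairwise incomparable and pairwise intersecting sets, that is, of pairwise weakly crossing sets, so by weak $k$-cross-freeness of $\F$ it has at most $k-1$ elements. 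Thus $\F_i[A]$ contains no antichain of size $k$.

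It remains to invoke the elementary fact that a finite poset $P$ with no antichain of size $k$ has a chain with at least $|P|/(k-1)$ elements: iteratively removing the minimal elements partitions $P$ into antichains, each of size at most $k-1$, and their number is at most the length of a longest chain of $P$, so some chain has length at least $|P|/(k-1)$. Applying this with $P=\F_i[A]$ yields a chain $\C_A\subseteq\F_i[A]$ with $|\C_A|\ge|\F_i[A]|/(k-1)$. Finally, since $A$ is the maximum of $\F_i[A]$, every longest chain of $\F_i[A]$ must contain $A$ (otherwise one could append $A$ and get a longer chain), so taking $\C_A$ to be a longest chain guarantees $\max\C_A=A$, completing the plan.

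I do not foresee a real obstacle. The one point that must not be overlooked is the width bound: within a single block, a set $A$ can contain at most $k-1$ pairwise incomparable members of $\F$, simply because any two members of $\F$ contained in $A$ each occupy more than half of $A$ and hence automatically cross. Everything else — fixing the assignment $\sigma$ to a maximal set and applying the standard chain/antichain estimate — is routine bookkeeping.
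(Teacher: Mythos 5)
Your proof is correct and follows essentially the same route as the paper: partition $\F_{i}$ by assigning each set to a maximal element containing it, observe that any two sets assigned to the same $A$ intersect (each occupies more than half of $A$), so weak $k$-cross-freeness bounds the width of each part by $k-1$, and extract a chain of length at least $|\F_{i}[A]|/(k-1)$. You are in fact slightly more careful than the paper on one point: you ensure $\max \C_{A}=A$ by taking a longest chain (which must contain the top element $A$), which is what guarantees that the maxima form an antichain, and your elementary antichain-layering argument is an acceptable substitute for the paper's appeal to Dilworth's theorem.
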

	
	\begin{proof}
		
		Let $\M$ be the family of maximal elements  of $\F_{i}$ with respect to containment. For each $M\in \M$, let $\mathcal{H}_{M}\subset \F_{i}$ be a family of sets contained in $M$ such that the system $\{\mathcal{H}_{M}\}_{M\in\M}$  forms a partition of $\F_{i}$.
		
		Note that any two sets in $\mathcal{H}_{M}$ have a nontrivial intersection, as every $A\in \mathcal{H}_{M}$ satisfies $A\subset M$ and $|A|>|M|/2$. Hence, $\mathcal{H}_{M}$ cannot contain an antichain of size $k$, otherwise, these $k$ sets would be pairwise weakly crossing. Therefore, by Dilworth's theorem \cite{D}, $\mathcal{H}_{M}$ contains a chain $\C_{M}$ of size at least $|\mathcal{H}_{M}|/(k-1)$. The collection $\Gamma_{i}=\{\C_{M}:M\in \M\}$ meets the requirements of the Claim.
	\end{proof}

    Let $\Gamma_{i}$ be a collection of chains in $\F_{i}$ satisfying the conditions in Claim~\ref{claim2}.  As the maximal elements of the chains in $\Gamma_{i}$ form an antichain, Claim \ref{claim1} gives the following upper bound on the size of $\Gamma_{i}$:

     \begin{equation}\label{nofchains}
    |\Gamma_{i}|\leq \frac{(k-1)n}{2^{i}}.
    \end{equation}

    From now on, fix some positive real numbers $a,b$ with $ a\leq b\leq \log n$ and consider the union of blocks $\F_{a,b}=\bigcup_{a<i\leq b}\F_{i}$. Analogously, let $\Gamma_{a,b}=\bigcup_{a<i\leq b}\Gamma_{i}$. Allowing $a$ and $b$ to be not necessarily integers will serve as a slight convenience. In what follows, we bound the size of $\F_{a,b}$.

     For each chain $\C\in \Gamma_{a,b}$, define a set $Y(\C)$ by picking an arbitrary element from each of the difference sets ${\C(j+1)\setminus \C(j)}$ for $j=1,\ldots,|\C|-1$, and from $\C_{1}$, as well. Clearly, we have $|Y(\C)|=|\C|$. For every $y\in[n]$, let $d(y)$ be the number of chains $\C$ in $\Gamma_{a,b}$ such that $y\in Y(\C)$. Note that
     \begin{equation}\label{equ2}
    \sum_{y\in [n]}d(y)=\sum_{\C\in\Gamma_{a,b}}|Y(\C)|=\sum_{\C\in\Gamma_{a,b}}|\C|\geq \frac{|\mathcal{F}_{a,b}|}{k-1},
     \end{equation}
     where the last inequality holds by Claim \ref{claim2}.

      We will bound the size of $\F_{a,b}$ by arguing that one cannot have $k$ different elements of $[n]$ appearing in $Y(\C)$ for many different sets $\C\in \Gamma_{a,b}$ without violating the condition that $\F$ is weakly $k$-cross-free. Thus, $\sum_{y\in [n]} d(y)$ must be small. For this, we need the following definition.

     \begin{definition}Let $y\in [n]$. Consider a $k$-tuple of chains $(\C_{1},\ldots,\C_{k})$ in $\Gamma_{a,b}$, where $\C_{i}\in\Gamma_{j_{i}}$ for a strictly increasing sequence $j_{1}<\ldots<j_{k}$. We say that $(C_{1},\ldots,C_{k})$ is \emph{good} for $y$ if
     \begin{description}
     	\item[(i)] $y\in Y(\C_{i})$ for $i\in [k]$,
     	\item[(ii)] if $C_{i}\in\C_{i}$ is the smallest set such that $y\in C_{i}$, then $C_{1}\subset\ldots\subset C_{k}$.    	
     \end{description}
     \end{definition}

     Next, we show that if $d(y)$ is large, then $y$ is good for many $k$-tuples of chains. Let $g(y)$ denote the number of good $k$-tuples for $y$.

     \begin{claim}\label{degree}
     For every $y\in [n]$, we have $$g(y)\geq \binom{d(y)/(k-1)^{2}}{k}.$$
     \end{claim}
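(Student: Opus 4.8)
The plan is to show that a point $y$ with large $d(y)$ is forced to be good for many $k$-tuples, by extracting from the chains that contain $y$ in their $Y$-set a long nested sequence that is spread across distinct, increasing blocks; any $k$ of its members will then automatically form a good $k$-tuple.

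First I would set up the bookkeeping. For every $\C\in\Gamma_{a,b}$ with $y\in Y(\C)$ we have $Y(\C)\subseteq\max\C$, so there is a well-defined smallest set $C(\C)\in\C$ containing $y$. The first point to establish is that at most $k-1$ such chains can come from a single block $\Gamma_j$: their maximal elements all contain $y$ and lie in the antichain $\{\max\C:\C\in\Gamma_j\}$, so $k$ of them would be $k$ pairwise weakly crossing sets, contradicting weak $k$-cross-freeness. Hence the $d(y)$ chains with $y\in Y(\C)$ meet at least $d(y)/(k-1)$ distinct blocks, and choosing one such chain per block gives a set $S$ of $m\ge d(y)/(k-1)$ chains lying in pairwise distinct blocks, each with $y\in Y(\C)$.

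The heart of the argument is the poset on $\{C(\C):\C\in S\}$ ordered by inclusion. If $\C,\C'\in S$ lie in blocks $j<j'$, then $|C(\C)|\le 2^{j+1}\le 2^{j'}<|C(\C')|$, so these two sets have distinct sizes; therefore either $C(\C)\subsetneq C(\C')$ or the two sets are weakly crossing (both contain $y$, neither contains the other, and $|C(\C')|>|C(\C)|\ge|C(\C)\cap C(\C')|$ makes all three parts nonempty). Since $\F$ is weakly $k$-cross-free, this poset has no antichain of size $k$, so by Dilworth's theorem it is covered by at most $k-1$ chains, one of which — coming from chains $\C_1,\dots,\C_s$ with $C(\C_1)\subsetneq\dots\subsetneq C(\C_s)$ — has $s\ge m/(k-1)\ge d(y)/(k-1)^2$. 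The same size comparison forces the blocks of $\C_1,\dots,\C_s$ to be strictly increasing along this chain.

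To finish, I would observe that for any indices $l_1<\dots<l_k$ the tuple $(\C_{l_1},\dots,\C_{l_k})$ is good for $y$: the chains lie in strictly increasing blocks, $y\in Y(\C_{l_i})$ for each $i$, and the smallest set of $\C_{l_i}$ containing $y$ is $C(\C_{l_i})$, with $C(\C_{l_1})\subset\dots\subset C(\C_{l_k})$. Distinct $k$-subsets yield distinct good tuples, so $g(y)\ge\binom{s}{k}\ge\binom{d(y)/(k-1)^2}{k}$ by monotonicity of $\binom{x}{k}$. The main obstacle here is conceptual rather than computational: one must notice that insisting on distinct blocks is exactly what (a) upgrades incomparability of the sets $C(\C)$ to genuine weak crossing, so that Dilworth's theorem becomes applicable, and (b) guarantees that the extracted nested sequence automatically respects the increasing-block ordering demanded in the definition of a good $k$-tuple. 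Once this is seen, the remainder is a two-step pigeonhole via Dilworth.
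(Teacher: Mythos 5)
Your proof is correct and follows essentially the same route as the paper's: both extract, from the $d(y)$ chains whose $Y$-sets contain $y$, a nested sequence of the smallest sets containing $y$ spread over at least $d(y)/(k-1)^{2}$ distinct blocks, using one pigeonhole step per block (via the antichain of maximal elements of each $\Gamma_{j}$, all containing $y$) and one application of Dilworth's theorem to the intersecting family of smallest sets. The only difference is the order of the two steps --- you prune to one chain per block before applying Dilworth, while the paper applies Dilworth first and then prunes --- which is immaterial.
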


     \begin{proof}
     	Let $d=d(y)$ and let $\C_{1},\ldots,\C_{d}\in\Gamma_{a,b}$ be the chains such that $y\in Y(\C_{i})$. Also, for $i=1,\ldots,d$, let $C_{i}$ be the smallest set in $\C_{i}$ containing $y$, and let $\mathcal{H}=\{C_{1},\ldots,C_{d}\}$.
     	
     	The family $\mathcal{H}$ is intersecting. Therefore, it cannot contain an antichain of size $k$, as any two elements of such an antichain are weakly crossing. Applying Dilworth's theorem \cite{D}, we obtain that $\mathcal{H}$ contains a chain of size at least $s=\lceil d/(k-1)\rceil$. Without loss of generality, let $C_{1}\subset\ldots\subset C_{s}$ be such a chain.
     	
     	For any $a\le i\le b$, $\F_{i}$ contains at most $k-1$ members of the sequence $C_{1},\ldots,C_{s}$. Otherwise, if $\C_{j_{1}},\ldots,\C_{j_{k}}\in \Gamma_{i}$ for some $1\leq j_{1}<\ldots<j_{k}\leq s$, the maximal elements $\max\C_{j_{1}},\ldots,\max \C_{j_{k}}$ are pairwise weakly crossing, because these sets form an antichain and contain $y$.
     	
     	This implies that the sets $C_{1},\ldots,C_{s}$ are contained in at least $r=\lceil s/(k-1)\rceil\geq d/(k-1)^2$ different blocks. Thus, we can assume that there exist $i_{1}<\ldots<i_{r}$ and $j_{1}<\ldots<j_{r}$ such that $C_{i_{l}}\in \F_{j_{l}}$ for $l\in [r]$.
     	
     	Then any $k$-element subset of $\{\C_{i_{1}},\ldots,\C_{i_{r}}\}$ is a good $k$-tuple for $y$, resulting in at least
     	$$\binom{r}{k}\geq \binom{d(y)/(k-1)^{2}}{k}$$
     	good $k$-tuples for $y$.
     \end{proof}

    Now we give an upper bound on the total number of $k$-tuples that may be good for some $y\in[n]$. A $k$-tuple of chains in $\Gamma_{a,b}$ is called \emph{nice} if it is good for some $y\in [n]$. Let $N$ be the number of nice $k$-tuples.

     \begin{claim}\label{available}
     	We have
     	$$N<\frac{2(k-1)^{k}n}{2^{a}}\binom{b}{k-1}.$$
     \end{claim}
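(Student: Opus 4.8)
The plan is to bound $N$ by \emph{charging} each nice $k$-tuple to a single one of its chains — namely the chain $\C_1$ lying in the lowest of the $k$ blocks it uses — and then to show that at most $(k-1)^{k-1}\binom{b}{k-1}$ nice tuples get charged to any fixed chain. Since $|\Gamma_{a,b}|$ is of order $n/2^a$ by \eqref{nofchains}, summing over all chains gives the claimed bound. The reason to charge to a chain rather than argue more directly is that the obvious approach — fixing the witness $y$ first and letting each $\C_i$ range over the chains of $\Gamma_{a,b}$ with $y\in Y(\C_i)$ — forces a sum of $|\C|$ over chains $\C$, which is of the same order as $|\F_{a,b}|$, exactly the quantity we want to bound, so the estimate becomes circular. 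The way out is that the first chain of a nice tuple imposes a \emph{witness-independent} constraint on all the other chains.

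The structural fact I would establish first is this. Let $(\C_1,\dots,\C_k)$, with $\C_i\in\Gamma_{j_i}$ and $a<j_1<\dots<j_k\le b$, be good for some $y\in[n]$, and let $C_i\in\C_i$ be the smallest member of $\C_i$ containing $y$. By condition (ii), $C_1\subseteq C_2\subseteq\dots\subseteq C_k$; and since every member of a chain contains that chain's minimum and lies inside its maximum, $\min\C_1\subseteq C_1\subseteq C_i\subseteq\max\C_i$ for every $i\in[k]$. As $\F$ has no empty member, $S:=\min\C_1$ is nonempty. Hence every chain $\C_i$ of the tuple satisfies $\max\C_i\supseteq S$, and $S$ depends only on $\C_1$.

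Next, the charging. Fix a chain $\D\in\Gamma_j$ with $a<j\le b$ and set $S=\min\D$. Every nice $k$-tuple with $\C_1=\D$ determines a choice of $k-1$ further block indices $j<j_2<\dots<j_k\le b$ and, for each $i\in\{2,\dots,k\}$, a chain $\C_i\in\Gamma_{j_i}$ with $\max\C_i\supseteq S$, and distinct tuples give distinct such data. The $k-1$ block indices are chosen from the integers in $(j,b]$, which gives at most $\binom{b}{k-1}$ possibilities (the extended binomial coefficient $\binom{x}{k-1}$ being monotone). For a fixed block index $j'$, the maximal elements of the chains $\C\in\Gamma_{j'}$ with $\max\C\supseteq S$ form an antichain (Claim~\ref{claim2}) all of whose members contain the nonempty set $S$; so they pairwise intersect, hence are pairwise weakly crossing, and there can be at most $k-1$ of them since $\F$ is weakly $k$-cross-free. (Here I use that the chains in $\Gamma_i$ have pairwise distinct maximal elements, which holds for the collections constructed in the proof of Claim~\ref{claim2}.) Thus for each choice of blocks there are at most $(k-1)^{k-1}$ choices of $(\C_2,\dots,\C_k)$, so at most $(k-1)^{k-1}\binom{b}{k-1}$ nice $k$-tuples are charged to $\D$.

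Summing over all $\D\in\Gamma_{a,b}$ gives $N\le|\Gamma_{a,b}|\,(k-1)^{k-1}\binom{b}{k-1}$, and then $|\Gamma_{a,b}|=\sum_{a<i\le b}|\Gamma_i|\le(k-1)n\sum_{a<i\le b}2^{-i}<(k-1)n\cdot 2^{1-a}=\frac{2(k-1)n}{2^{a}}$, using \eqref{nofchains} and the geometric estimate $\sum_{a<i\le b}2^{-i}<2^{1-a}$. Combining, $N<\frac{2(k-1)^{k}n}{2^{a}}\binom{b}{k-1}$. The crux of the argument is the step singled out above — arranging the count so that it is linear in the \emph{number} of chains, which is of order $n/2^a$, rather than in their total size — while the remaining estimates are routine bookkeeping with the antichain structure of the maximal elements.
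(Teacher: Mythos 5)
Your proposal is correct and follows essentially the same route as the paper: charging each nice $k$-tuple to its first chain, observing that $\min\C_1$ is contained in every $\max\C_i$ so that at most $k-1$ chains per block can participate, and multiplying by $\binom{b}{k-1}$ block choices and the bound $|\Gamma_{a,b}|<2(k-1)n/2^{a}$ from (\ref{nofchains}). No substantive differences.
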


     \begin{proof}
     	Let $\C\in \Gamma_{a,b}$. Let us count the number of nice $k$-tuples $(\C_{1},\ldots,\C_{k})$ for which $\C=\C_{1}$. Note that in a nice $k$-tuple $(\C_{1},\ldots,\C_{k})$, the set $\min \C_{1}$ is contained in $\max\C_{1},\ldots,\max\C_{k}$.
     	
     	But then, for any positive integer $i$ satisfying $a< i\leq b$, there are at most $k-1$ chains in $\Gamma_{i}$ that can belong to a nice $k$-tuple with first element $\C$. Indeed, suppose that there exist $k$ chains $\mathcal{D}_{1},\ldots,\mathcal{D}_{k}$ in $\Gamma_{i}$ that all appear in a nice $k$-tuple with their first element being $\C$. Then $\{\max\mathcal{D}_{1},\ldots,\max\mathcal{D}_{k}\}$ is an intersecting antichain: it is intersecting because $\max \mathcal{D}_{j}$ contains $\min\C$ for $j\in [k]$, and it is an antichain, by the definition of $\Gamma_{i}$. Thus, any two sets among $\max\mathcal{D}_{1},\ldots,\max\mathcal{D}_{k}$ are weakly crossing, a contradiction.
     	
     	Hence, the number of nice $k$-tuples $(C_{1},\ldots,C_{k})$ for which $\C_{1}=\C$ is at most $\binom{b}{k-1}(k-1)^{k-1}$, as there are at most $\binom{b}{k-1}$ choices for $j_{2}<\ldots<j_{k}\leq b$ such that $C_{l}\in \Gamma_{j_{l}}$ for $l=2,\ldots,k$, and there are at most $k-1$ further choices for each chain $C_{l}$ in $\Gamma_{j_{l}}$.
     	
     	Clearly, the number of choices for $\C=\C_{1}$ is at most the size of $\Gamma_{a,b}$, which is  $$|\Gamma_{a,b}|=\sum_{a<i\leq b}|\Gamma_{i}|\leq \sum_{a<i\leq b}\frac{(k-1)n}{2^{i}}<\frac{2(k-1)n}{2^{a}};$$
     	see (\ref{nofchains}) for the first inequality. Hence, the total number of nice $k$-tuples is at most $$\frac{2(k-1)^{k}n}{2^{a}}\binom{b}{k-1}.$$
     \end{proof}

     The next claim is the key observation in our proof. It tells us that a $k$-tuple of chains cannot be good for $k$ different elements of $[n]$.

     \begin{claim}\label{config}
     	There are no $k$ different elements $y_{1},\ldots,y_{k}\in [n]$ and a $k$-tuple $(\C_{1},\ldots,\C_{k})$ such that $(\C_{1},\ldots,\C_{k})$ is good for $y_{1},\ldots,y_{k}$.
     \end{claim}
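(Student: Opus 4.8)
The plan is to assume, for the sake of contradiction, that such $y_{1},\ldots,y_{k}$ and $(\C_{1},\ldots,\C_{k})$ exist, where $\C_{i}\in\Gamma_{j_{i}}$ with $j_{1}<\ldots<j_{k}$, and to manufacture out of them $k$ pairwise weakly crossing members of $\F$. For $i,l\in[k]$ write $C_{i}^{(l)}$ for the smallest set of the chain $\C_{i}$ containing $y_{l}$; this is well defined by part (i) of the definition of goodness, and $C_{i}^{(l)}\in\F_{j_{i}}$, so $2^{j_{i}}<|C_{i}^{(l)}|\le 2^{j_{i}+1}$. By the very construction of $Y(\C_{i})$, each of its elements is the chosen representative of a distinct ``level'' of $\C_{i}$, hence distinct elements of $Y(\C_{i})$ have distinct smallest containing sets in $\C_{i}$. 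Since $y_{1},\ldots,y_{k}$ are pairwise distinct and all lie in $Y(\C_{i})$, the sets $C_{i}^{(1)},\ldots,C_{i}^{(k)}$ are pairwise distinct, and therefore linearly ordered by inclusion.

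The first real step is to arrange that this linear order is the \emph{same} inside every chain. Part (ii) of the definition of goodness, applied to $y_{l}$, yields $C_{1}^{(l)}\subsetneq C_{2}^{(l)}\subsetneq\cdots\subsetneq C_{k}^{(l)}$ for each fixed $l$ (the inclusions are strict because the blocks $\F_{j_{i}}$ lie in pairwise disjoint size ranges). Hence, if $C_{1}^{(a)}\subseteq C_{1}^{(b)}$, then $y_{a}\in C_{1}^{(a)}\subseteq C_{1}^{(b)}\subseteq C_{i}^{(b)}$ for every $i$, and since $C_{i}^{(a)}$ is the smallest set of $\C_{i}$ containing $y_{a}$, this forces $C_{i}^{(a)}\subseteq C_{i}^{(b)}$. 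Thus, after relabelling $y_{1},\ldots,y_{k}$ so that $C_{1}^{(1)}\subsetneq\cdots\subsetneq C_{1}^{(k)}$, we automatically obtain $C_{i}^{(1)}\subsetneq\cdots\subsetneq C_{i}^{(k)}$ for \emph{all} $i\in[k]$. The consequence I will use repeatedly is that $y_{l}\in C_{i}^{(l')}$ if and only if $l\le l'$ (immediate from this normalization and the minimality of $C_{i}^{(l)}$).

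With this normalization in hand, I claim that the $k$ sets on the secondary diagonal, $A_{i}:=C_{i}^{(k+1-i)}$ for $i=1,\ldots,k$, are pairwise weakly crossing; since $\F$ is weakly $k$-cross-free, this is the desired contradiction. Fix $i<i'$ and set $A=A_{i}$, $B=A_{i'}$. First, $A\in\F_{j_{i}}$ and $B\in\F_{j_{i'}}$ with $j_{i'}\ge j_{i}+1$, so $|B|>2^{j_{i'}}\ge 2^{j_{i}+1}\ge|A|$, whence $|B\setminus A|\ge|B|-|A\cap B|\ge|B|-|A|>0$ and $B\setminus A\neq\emptyset$. Second, $y_{k+1-i}\in A$ trivially, whereas $y_{k+1-i}\notin B=C_{i'}^{(k+1-i')}$ because $k+1-i>k+1-i'$; so $A\setminus B\neq\emptyset$. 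Third, $y_{k+1-i'}\in B$ trivially, and $y_{k+1-i'}\in A=C_{i}^{(k+1-i)}$ because $k+1-i'\le k+1-i$; so $A\cap B\neq\emptyset$. Hence $A$ and $B$ are weakly crossing, which finishes the argument.

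The one place that requires care — and, I expect, the main obstacle — is the relabelling step: a priori the sets $C_{i}^{(1)},\ldots,C_{i}^{(k)}$ could appear in different orders inside different chains $\C_{i}$, in which case the secondary-diagonal configuration would fail to consist of pairwise incomparable sets and the whole scheme would collapse. The observation that part (ii) of goodness propagates a single inclusion order through all the chains is what makes the argument go through; everything else is routine manipulation of inclusions and cardinalities.
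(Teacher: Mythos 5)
Your proposal is correct and follows essentially the same route as the paper: normalize so that the inclusion order of the minimal containing sets is the same in every chain (your propagation argument via minimality is the contrapositive of the paper's), then show the anti-diagonal sets $C_{i}^{(k+1-i)}$ are pairwise weakly crossing, using the disjoint block size ranges for one non-containment and the elements $y_{l}$ for the other. No gaps; the justification of distinctness via the construction of $Y(\C)$ and the relabelling step are exactly the points the paper also relies on.
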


     \begin{proof}
     	Suppose that there exist such a $k$-tuple $(\C_{1},\ldots,\C_{k})$ and $k$ elements $y_{1},\ldots,y_{k}$. For $i,j\in [k]$, let $C_{i,j}$ be the smallest set in $\C_{i}$ that contains $y_{j}$. By the definition of a good $k$-tuple, we have $C_{1,j}\subset \ldots\subset C_{k,j}$ for $j\in [k]$. Also, the sets $C_{1,1},\ldots,C_{1,k}$ are distinct elements of the chain $\C_{1}$, so, without loss of generality, we can assume that $C_{1,1}\subset C_{1,2}\subset\ldots\subset C_{1,k}$.
     	
     	First, we show that this assumption forces $C_{i,1}\subset\ldots\subset\C_{i,k}$ for all $i\in [k]$, as well. To this end, it is enough to prove that we cannot have $C_{i,j'}\subset C_{i,j}$ for some $1\leq j<j'\leq k$. Indeed, suppose that $C_{i,j'}\subset C_{i,j}$. Then $y_{j}\in C_{i,j}$, but $y_{j}\not\in C_{i,j'}$. However, $y_{j}\in C_{1,j}$ and $C_{1,j}\subset C_{1,j'}\subset C_{i,j'}$, contradiction.
     	
     	Next, we show that any two sets in the family $$\mathcal{H}:=\big\{C_{i,k+1-i}:i\in [k]\big\}$$ are weakly crossing. Every element of $\mathcal{H}$ contains $C_{1,1}$, so $\mathcal{H}$ is an intersecting family. Our task is reduced to showing that $\mathcal{H}$ is an antichain. Suppose that $C_{i,k+1-i}\subset C_{i',k+1-i'}$ for some $i,i'\in [k]$, $i\neq i'$. Then we must have $i<i'$. Otherwise, $|C_{i,k+1-i}|>|C_{i',k+1-i'}|,$ as $C_{i,k+1-i}\in \F_{j_{i}}$ and $C_{i',k+1-i'}\in \F_{j_{i'}}$ hold for some $j_{i'}<j_{i}$. But if $i<i'$, we have $y_{k+1-i}\in C_{i,k+1-i}$ and $y_{k+1-i}\not\in C_{i',k+1-i'}$, so $C_{i,k+1-i}\not\subset C_{i',k+1-i'}$.
     	
     	Thus, any two sets of the $k$-element family $\mathcal{H}$ are weakly crossing, which is a contradiction.
     \end{proof}

     Let $M$ be the number of pairs $(y,(C_{1},\ldots,C_{k}))$ such that $(C_{1},\ldots,C_{k})$ is a good $k$-tuple for $y\in [n]$. Let us double count $M$.

     On one hand, Claim \ref{config} implies that $M\leq (k-1)N$. Plugging in our upper bound of Claim \ref{available} for $N$, we get
     $$M\leq (k-1)N<\frac{2(k-1)^{k+1}n}{2^{a}}\binom{b}{k-1}\leq \frac{2n(k-1)^{k+1}b^{k-1}}{2^{a}(k-1)!}.$$
     For simplicity, write $c_{1}(k)=2(k-1)^{k+1}/(k-1)!$, then our inequality becomes
     \begin{equation}\label{Mupper}
     M\leq \frac{c_{1}(k)nb^{k-1}}{2^{a}}.
     \end{equation}

     On the other hand, we have
     $$M= \sum_{y\in [n]}g(y),$$
     where $g(y)$, as before, stands for the number of good $k$-tuples for $y$. Applying Claim \ref{degree}, we can bound the right-hand side from below, as follows.
     $$\sum_{y\in [n]}g(y)\geq  \sum_{y\in [n]} \binom{d(y)/(k-1)^{2}}{k}.$$
     Exploiting the convexity of the function $\binom{x}{k}$, Jensen's inequality implies that the right-hand side is at least
     $$n\binom{\sum_{y\in [n]}d(y)/(k-1)^{2}n}{k}.$$
     Finally, using (\ref{equ2}), we obtain
     \begin{equation}\label{Mlower}
     M\geq n\binom{|\F_{a,b}|/(k-1)^{3}n}{k}.
     \end{equation} Suppose that $|\F_{a,b}|>2k(k-1)^{3}n$. In this case, we have $$\binom{|\F_{a,b}|/(k-1)^{3}n}{k}>\left(\frac{|\mathcal{F}_{a,b}|}{2(k-1)^{3}n}\right)^{k}\frac{1}{k!}.$$
     Writing $c_{2}(k)=1/2^{k}(k-1)^{3k}k!$, we can further bound the right-hand side of (\ref{Mlower}) and arrive at the inequality
     \begin{equation}\label{Mlower2}
     M>\frac{c_{2}(k)|\F_{a,b}|^{k}}{n^{k-1}}.
     \end{equation}

     Comparing (\ref{Mupper}) and (\ref{Mlower2}), we obtain
     $$\frac{c_{1}(k)nb^{k-1}}{2^{a}}> \frac{c_{2}(k)|\F_{a,b}|^{k}}{n^{k-1}},$$
     which yields the following upper bound for the size of $\F_{a,b}$:
     $$|\mathcal{F}_{a,b}|<n\left(\frac{c_{1}(k)}{c_{2}(k)}\right)^{1/k}\frac{b^{(k-1)/k}}{2^{a/k}}.$$
     Recall that (\ref{Mlower2}) and the last inequality hold under the assumption that $|\F_{a,b}|>2k(k-1)^{3}n$. Hence, writing $c_{3}(k)=(c_{1}(k)/c_{2}(k))^{1/k}$, we get that
     \begin{equation}\label{finalequ}
     |\mathcal{F}_{a,b}|<\max\left\{2k(k-1)^{3}n,\frac{c_{3}(k)nb^{(k-1)/k}}{2^{a/k}}\right\}
     \end{equation}
     holds without any assumption.

     We finish the proof by choosing an appropriate sequence $\{a_{i}\}_{i=0}^{s}$ and applying the bound (\ref{finalequ}) for the families $\F_{a_{i},a_{i+1}}$.

      Define the sequence $\{a_{i}\}_{i=0,1,\ldots}$ such that $a_{0}=0$, $a_{1}=k^{2}$ and $a_{i+1}=2^{a_{i}/(k-1)}$ for $i=1,2,\ldots$. Let $s$ be the smallest positive integer such that $a_{s}> \log n$. Clearly, we have $s=O_{k}(\log^{*}(n))$.
      Also,
      $$|\mathcal{F}_{a_{0},a_{1}}|=|\mathcal{F}_{0,k^{2}}|\leq \sum_{l=1}^{2^{k^{2}}}\frac{(k-1)n}{l}=O_{k}(n),$$
      as $\F$ has at most $(k-1)n/l$ elements of size $l$ for $l\in [n]$, by the weakly $k$-cross-free property. Finally, for $i=1,\ldots,s-1$, (\ref{finalequ}) yields that
     $$|\mathcal{F}_{a_{i},a_{i+1}}|<
     \max\left\{2k(k-1)^{3}n,\frac{c_{3}(k)na_{i+1}^{(k-1)/k}}{2^{a_{i}/k}}\right\}=
     \max\{2k(k-1)^{3},c_{3}(k)\}n.$$
     The proof of Theorem \ref{mainthm2} can be completed   by noting that	
     $$|\F|= \sum_{i=0}^{s-1}|\mathcal{F}_{a_{i},a_{i+1}}|\leq O_{k}(n)+s\max\{2k(k-1)^{3},c_{3}(k)\}n=O_{k}(n\log^{*} n).$$

\end{proof}

\bigskip

\noindent{\bf Acknowledgement.} We are grateful to Peter Frankl for his useful remarks. In particular, he pointed out that with more careful computation Claim~\ref{degree} can be improved to
$$g(y)\geq (k-1)^k\binom{d(y)/(k-1)^{2}}{k}.$$

\end{document}